\tikzstyle{vertex}=[auto=left,circle,draw=black,fill=white, inner sep=1.5]
\newtheorem{theorem}{Theorem}[section]
\newtheorem{prop}[theorem]{Proposition}
\newtheorem{corollary}{Corollary}[theorem]
\newtheorem{lemm}{Lemma}[section]
\title{ Signed Complete 
Graphs on Six Vertices}
\author{ Deepak\\
Department of Mathematics\\
Indian Institute of Technology Guwahati\\
Guwahati, India - 781039\\
Email: deepakmath55555@iitg.ac.in\\
\\
Bikash Bhattacharjya\\
Department of Mathematics\\
Indian Institute of Technology Guwahati\\
Guwahati, India - 781039\\
Email: b.bikash@iitg.ac.in
}
\begin{document}
\maketitle

\vspace{-0.3in}

\begin{center}{Abstract}\end{center}
A signed graph is a graph whose edges are labeled positive or negative. The sign of a cycle is the product of the signs of its edges. Zaslavsky proved in 2012 that, up to switching isomorphism, there are six different signed Petersen graphs. It is also known that, up to switching isomorphism, there are two signed $K_3$'s, three signed $K_4$'s, and seven signed $K_5$'s. In this paper, we prove that there are sixteen signed $K_6$'s upto switching ismomorphism.

\noindent {\textbf{Keywords}: Signed graph, Switching, Switching isomorphism.} 
 
\section{Introduction}
Throughout the paper we only consider simple graphs. For all the graph-theoretic terms that have not been defined but are used in the paper, see Bondy \cite{Bondy}. Harary \cite{F.Harary} was the first to introduced signed graph and balance. Harary \cite{Harary} used signed graphs to model social stress in small groups of people in social psychology. Subsequently, signed graphs have turned out to be valuable. The fundamental property of signed graphs is balance. A cycle is positive or negative according as the product of the signs of its edges is positive or negative. A signed graph is balanced if all its cycles are positive. The second basic property of signed graphs is switching equivalence. Switching is a way of turning one signature of a graph into another, without changing the sign of its cycles. Many properties of signed graphs are unaltered by switching, the set of unbalanced cycles is a notable example. The author in \cite{T.Zaslavsky} described the non-isomorphic signed Petersen graph. In \cite{Sivaraman}, the author have studied the non-isomorphic signatures on Heawood graph. In this paper we determine the non-isomorphic signatures on $K_6$. 

\section{Preliminaries}
A \textit{signified graph} is a graph $G$ together with an assignment of $+$ or $-$ signs to its edges. If $\Sigma$ is the set of negative edges of a graph $G$, then we denote the signified graph by $(G, \Sigma)$. The set $\Sigma$ is called the signature of $(G, \Sigma)$. Signature $\Sigma$ of graph $G$ can also  be viewed as a function from $E(G)$ to $\{+1, -1\}$. A \textit{switching (resigning)} of a signified graph at a vertex $v$ is to change the sign of each edge incident to $v$. We say $(G,\Sigma_{2})$ is \textit{switching equivalent} or simply \textit{equivalent} to $(G,\Sigma_{1})$ if it is obtained from $(G,\Sigma_{1})$ by a sequence of switchings. Equivalently, we say $(G,\Sigma_{2})$ is switching equivalent to $(G,\Sigma_{1})$ if there exist a function $f$ from $V$ to $\{+1, -1\}$ such that $\Sigma_{2}(e)$ = $f(u)\Sigma_{1}(e)f(v)$ for each edge $e = uv$. Switching defines an equivalence relation on the set of all signified graphs over $G$ (also on the set of signatures). Each such equivalence class is called a \textit{signed graph} and is denoted by $[G,\Sigma]$, where $(G,\Sigma)$ is any member of the class.

We say that two signified graphs $(G, \Sigma_{1})$ and $(H, \Sigma_{2})$ are \textit{isomorphic}, denoted $\Sigma_{1}$ $\cong$ $\Sigma_{2}$, if there exist a graph isomorphism $\psi : V(G) \rightarrow V(H)$ which preserves the edge signs. Signified graphs $(G, \Sigma_{1})$ and $(H, \Sigma_{2})$ are called switching isomorphic if $\Sigma_{1}$ is isomorphic to a switching of $\Sigma_{2}$. That is, there exist a signified graph $(H, \Sigma_{2}^{\prime})$ which is equivalent to $(H, \Sigma_{2})$ such that $\Sigma_{1}$ $\cong$ $\Sigma_{2}^{\prime}$. We use the notation $\Sigma_{1}$ $\sim$ $\Sigma_{2}$ to mean that $\Sigma_{1}$ is switching isomorphic to $\Sigma_{2}$.

\begin{prop} \cite{Naserasr} \label{Distinct SG}
  If $G$ has $m$ edges, $n$ vertices and $c$ components, then there are $2^{(m-n+c)}$ distinct signed graphs on $G$.
\end{prop}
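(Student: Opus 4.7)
The plan is to treat switching as a group action on the set of signatures and count orbits via a stabilizer computation. Since each of the $m$ edges can be independently signed $+$ or $-$, there are exactly $2^{m}$ signified graphs on $G$. The switching operations are indexed by functions $f : V(G) \to \{+1,-1\}$, giving a group of size $2^{n}$ acting on signatures by $\Sigma^{f}(uv) = f(u)\Sigma(uv)f(v)$.

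The key step is to determine the stabilizer of an arbitrary signature $\Sigma$. Observe that $\Sigma^{f} = \Sigma$ if and only if $f(u)f(v) = 1$ for every edge $uv$, which holds if and only if $f$ is constant on each connected component of $G$. This condition is independent of $\Sigma$, so every signature has stabilizer of size exactly $2^{c}$, and by orbit-stabilizer every orbit has size $2^{n}/2^{c} = 2^{\,n-c}$. Hence the number of switching classes is $2^{m}/2^{\,n-c} = 2^{\,m-n+c}$.

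An equivalent canonical-form argument, which I find more transparent, proceeds via a spanning forest $F$ of $G$ with $n-c$ edges. A leaf-peeling switching, applied component by component, shows that every signature is switching equivalent to one in which every edge of $F$ is positive: repeatedly pick a pendant vertex of the remaining forest and, if the incident forest edge is negative, switch that vertex. Once $F$ is all-positive, any further switching that preserves this property must be constant on each component of $G$, and such a switching fixes every edge (forest or not). Consequently each switching class has a unique representative in which $F$ is positive, parametrised freely by the $\pm 1$-labels on the $m-(n-c)$ non-forest edges, yielding exactly $2^{\,m-n+c}$ classes.

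The main obstacle, in either approach, is the rigidity step: showing that the only switching operations preserving a given signature are those constant on each component. This is the substantive content; once established, both the orbit counting and the canonical-form uniqueness follow directly.
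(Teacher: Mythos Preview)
Your proof is correct; both the orbit--stabilizer argument and the spanning-forest canonical-form argument are standard and complete. Note, however, that the paper does not actually prove this proposition: it is quoted without proof from \cite{Naserasr}, so there is no in-paper argument to compare against. Your two approaches are in fact the usual ones found in the literature (the spanning-forest version is essentially Zaslavsky's original), and the ``rigidity step'' you flag---that a switching fixing a signature must be constant on components---is indeed the only nontrivial point, which you handle correctly.
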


One of the first theorems in the theory of signed graphs is that the set of unbalanced cycles uniquely determines the class of signed graphs to which a signified graph belongs. More precisely, we state the following theorem.

\begin{theorem}\cite{Zaslavsky}\label{Signature}
Two signatures $\Sigma_{1}$ and $\Sigma_{2}$ are equivalent if and only if they have the same set of unbalanced cycles.
\end{theorem}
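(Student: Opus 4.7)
The plan is to prove the two directions of Theorem \ref{Signature} separately. For the forward implication I would fix a single switching at a vertex $v$ and show directly that every cycle carries the same sign before and after. The observation is that any cycle either avoids $v$ (in which case none of its edges are affected) or passes through $v$, in which case exactly two of its edges are incident to $v$; both flip sign, and the product of edge signs around the cycle is unchanged. Iterating over the sequence of vertex switchings that carries $\Sigma_1$ to $\Sigma_2$ then shows that every cycle has the same sign under $\Sigma_1$ and under $\Sigma_2$, and in particular the set of unbalanced cycles is preserved.

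For the converse I would consider the pointwise product signature $\Sigma$ defined by $\Sigma(e) = \Sigma_1(e)\Sigma_2(e)$ for each edge $e$. The hypothesis that $\Sigma_1$ and $\Sigma_2$ have the same unbalanced cycles says precisely that every cycle carries the same sign under $\Sigma_1$ as under $\Sigma_2$, so the product sign around every cycle in $(G,\Sigma)$ equals $+1$; that is, $(G,\Sigma)$ is balanced. The key input is then Harary's balance theorem: a signed graph is balanced if and only if there exists a function $f:V(G)\to\{+1,-1\}$ with $\Sigma(uv) = f(u)f(v)$ for every edge $uv$. Granting this and multiplying both sides by $\Sigma_1(uv)$, I obtain $\Sigma_2(uv) = f(u)\,\Sigma_1(uv)\,f(v)$, which is exactly the condition in the Preliminaries for $\Sigma_2$ to be a switching of $\Sigma_1$ via the vertex function $f$.

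The main obstacle is therefore the balance theorem itself, i.e.\ producing the function $f$ from the positivity of all cycles. I would prove this component by component. On each connected component, fix a spanning tree $T$ with root $r$, declare $f(r)=+1$, and define $f(v)$ inductively as the product of the signs along the unique $r$-to-$v$ path in $T$. This makes $\Sigma(uv)=f(u)f(v)$ automatic for tree edges; for a non-tree edge $uv$ one checks that the unique cycle in $T+uv$ being positive forces $\Sigma(uv)=f(u)f(v)$ as well. This consistency check on non-tree edges is the only step that genuinely uses the balance hypothesis, and is the heart of the argument; everything else is bookkeeping about how switching interacts with the $\pm 1$-valued edge function.
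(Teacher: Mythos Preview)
Your argument is correct and follows the standard route to this classical result: the forward direction via the observation that a vertex switching flips either zero or two edges of any cycle, and the converse via the product signature $\Sigma=\Sigma_1\Sigma_2$, Harary's balance theorem, and the spanning-tree construction of the potential function $f$. There is nothing to compare against, however, because the paper does not supply its own proof of Theorem~\ref{Signature}: the result is simply quoted from Zaslavsky \cite{Zaslavsky} and used as a black box. Your write-up is therefore more than what the paper itself contains; if anything, it could serve as the missing proof.
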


Thus if the signed graphs $[G, \Sigma_1]$ and $[G, \Sigma_2]$ have different sets of negative cycles, then they cannot be switching isomorphic. 
 
\section{Notations} 
In a signed graph $[G, \Sigma]$, a signature $\Sigma^{\prime}$ which is equivalent to $\Sigma$ is said to be a \textit{minimal signature} if the number of edges in $\Sigma^{\prime}$ is minimum among all equivalent signatures of $\Sigma$. We denote the number of edges in $\Sigma^{\prime}$ by $|\Sigma^{\prime}|$.

 For example, if $[G, \Sigma]$ is balanced then $\Sigma^{\prime}$ = $\emptyset$ and so $|\Sigma^{\prime}| = 0$. Notice that there may be two or more than two minimal signatures for a signed graph $[G, \Sigma]$. For example, in the signed graph $[K_3, \Sigma]$, where $\Sigma$ = $\{12, 23, 31\}$, the equivalent signatures $\Sigma_1 = \{12\}$ and $\Sigma_2 = \{23\}$ are minimal signatures. This shows that minimal signature of a signed graph is not unique. 

Note that automorphism group of $K_6$ is $S_6$, and it is vertex-transitive as well as edge-transitive. Thus it is easy to see that for any two isomorphic subgraphs of $K_6$, there exist an automorphism of $K_6$ which maps one subgraph onto the other subgraph.

 The notation $\Sigma(e_1,e_2,...,e_k)$ denotes a signature $\Sigma$ which contains the edges $e_1, e_2,...,e_k$ of a graph $G$. For example, in the graph $K_6$ of Figure~\ref{1}, $\Sigma(u_1u_2, u_3u_4)$ denotes a signature containing the edges $u_1u_2$ and $u_3u_4$.

Further, we say that two signatures $\Sigma_1$ and $\Sigma_2$ of a graph $G$ are automorphic if there exists an automorphism $f$ of $G$ such that $uv \in \Sigma_1$ if and only if $f(u)f(v) \in \Sigma_2$. If two signatures are automorphic then they are said to be \textit{automorphic type} signatures. If two signatures $\Sigma_1$ and $\Sigma_2$ of a graph $G$ are not automorphic to each other, then we say that they are distinct automorphic type signatures. For example, in signed graphs $[K_6, \{u_1u_2\}]$ and $[K_6, \{u_3u_5\}]$, the signatures $\{u_1u_2\}$ and $\{u_3u_5\}$ are automorphic type signatures.

The \textit{distance} between two edges $e_{1}$ and $e_{1}$ of a graph $G$, denoted by $d_{G}(e_{1},e_{2})$, is the number of vertices of a shortest path connecting their end points. For example, in the complete graph $K_6$ of Figure~\ref{1}, for edges $e_1=u_1u_2$ and $e_2=u_3u_4$, we have $d_{G}(e_{1},e_{2})=2$.

 Throughout this paper, a negative edge in a signed graph is drawn as dashed line, and a positive edge is drawn as solid line. In the next section, we discuss signings on $K_6$. 

\section{Signings on $K_6$} 
The complete graph $K_6$ is shown in Figure~\ref{1}. 
\begin{figure}[h]
\centering
\begin{tikzpicture}[scale=0.35]
\node[vertex] (v1) at (12,7) {};
\node [below] at (12,8.2) {$u_{1}$};
\node[vertex] (v2) at (15,5.1) {};
\node [below] at (15.9,5.5) {$u_{2}$};
\node[vertex] (v3) at (15,2) {};
\node [below] at (15.7,2.5) {$u_{3}$};
\node[vertex] (v4) at (12,0) {};
\node [below] at (12,-0.2) {$u_{4}$};
\node[vertex] (v5) at (9,2) {};
\node [below] at (8.2,2.7) {$u_{5}$};
\node[vertex] (v6) at (9,5.3) {};
\node [below] at (8.2,5.8) {$u_{6}$};
\foreach \from/\to in {v1/v2,v2/v3,v3/v4,v4/v5,v5/v6,v1/v6,v1/v3,v1/v4,v1/v5,v2/v4,v2/v5,v2/v6,v3/v5,v3/v6,v4/v6} \draw (\from) -- (\to);
\end{tikzpicture}
\caption{The complete graph $K_6$.}
\label{1}
\end{figure}
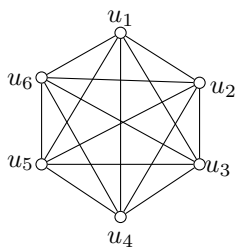
For a signed graph $[G, \Sigma]$, let the graph $G_{\Sigma}$ be such that $V(G_{\Sigma})$ = $V(G)$ and $E(G_{\Sigma})$ = $\Sigma$.
\begin{theorem}\label{Min}
Let $[G, \Sigma]$ be a signed graph on $n$ vertices and let $\Sigma^{\prime}$ be an equivalent minimal signature of $\Sigma$. Then $d_{G_{\Sigma^{\prime}}}(v)$ $\leq$ $\lfloor \frac{n-1}{2} \rfloor$ for each vertex $v \in V(G_{\Sigma^{\prime}})$.
\end{theorem}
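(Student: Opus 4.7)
The plan is to argue by the switching operation: if some vertex had more than $\lfloor (n-1)/2 \rfloor$ negative edges incident to it, then switching at that vertex would yield an equivalent signature with strictly fewer negative edges, contradicting the minimality of $\Sigma'$.

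More concretely, I would first set up the bookkeeping. Fix a vertex $v \in V(G)$, let $d(v)$ denote its degree in $G$, and set $k = d_{G_{\Sigma'}}(v)$, the number of negative edges of $(G,\Sigma')$ incident to $v$. Let $\Sigma''$ denote the signature obtained by switching $\Sigma'$ at $v$. Then the edges incident to $v$ that were negative in $\Sigma'$ become positive, and those that were positive become negative, while the signs of all other edges are unchanged. Counting negative edges gives
\[
|\Sigma''| = (|\Sigma'| - k) + (d(v) - k) = |\Sigma'| + d(v) - 2k.
\]

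Next I would invoke the hypothesis that $\Sigma'$ is a minimal signature: since $\Sigma'' \sim \Sigma'$, we must have $|\Sigma''| \ge |\Sigma'|$, whence $d(v) - 2k \ge 0$, i.e.\ $k \le d(v)/2$. Because $k$ is an integer and $d(v) \le n-1$, this yields
\[
d_{G_{\Sigma'}}(v) = k \;\le\; \left\lfloor \tfrac{d(v)}{2} \right\rfloor \;\le\; \left\lfloor \tfrac{n-1}{2} \right\rfloor,
\]
which is the desired bound. Since $v$ was arbitrary, this finishes the proof.

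There is not really a serious obstacle here; the only place where one must be a little careful is the integrality step (passing from $k \le d(v)/2$ to $k \le \lfloor (n-1)/2\rfloor$) and the observation that switching at a single vertex preserves the switching-equivalence class, so that applying the inequality $|\Sigma''|\ge|\Sigma'|$ guaranteed by minimality is legitimate. Everything else is a direct two-line counting argument.
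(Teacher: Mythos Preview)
Your argument is correct and uses the same core idea as the paper: switching at a vertex with too many incident negative edges strictly decreases $|\Sigma'|$, contradicting minimality. Your version is in fact a bit cleaner, since you give the exact count $|\Sigma''|=|\Sigma'|+d(v)-2k$ and handle the integrality/floor step explicitly, whereas the paper frames it as an iterative reduction.
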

\begin{proof}
Let, if possible, there exists a vertex $u \in V(G_{\Sigma})$ such that $d_{G_{\Sigma}}(u) > \frac{n-1}{2}$. Resign at $u$ to get an equivalent signature $\Sigma_1$. It is clear that $|\Sigma| > |\Sigma_1|$. We apply the same operation on $\Sigma_1$, if $G_{\Sigma_1}$ has a vertex of degree greater than $\frac{n-1}{2}$. Repeated application, if needed, of this process will ultimately give us an equivalent signature $\tilde{\Sigma}$ of minimum number of edges such that degree of every vertex of $\tilde{\Sigma}$ is atmost $\lfloor \frac{n-1}{2} \rfloor$. It is clear that $|\tilde{\Sigma}| = |\Sigma^{\prime}|$, and every vertex of $\Sigma^{\prime}$ have degree atmost $\lfloor \frac{n-1}{2} \rfloor$.
\end{proof}
As a particular case of Theorem~\ref{Min}, we get the following corollary.

\begin{corollary} \label{minimal}
Let $[K_6, \Sigma]$ be a signed graph and $\Sigma^{\prime}$ be an equivalent minimal signature of $\Sigma$. Then the size of $\Sigma^{\prime}$ is at most 6.
\end{corollary}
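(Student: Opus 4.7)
The plan is to deduce the bound directly from Theorem~\ref{Min} together with the handshake lemma. Since $K_6$ has $n=6$ vertices, Theorem~\ref{Min} applied to any equivalent minimal signature $\Sigma'$ tells us that every vertex of $G_{\Sigma'}$ has degree at most $\lfloor (6-1)/2 \rfloor = 2$.

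Consequently, $G_{\Sigma'}$ is a spanning subgraph of $K_6$ with maximum degree $2$. Summing degrees,
\[
2|\Sigma'| \;=\; \sum_{v \in V(G_{\Sigma'})} d_{G_{\Sigma'}}(v) \;\le\; 6 \cdot 2 \;=\; 12,
\]
which gives $|\Sigma'| \le 6$, as required. There is no real obstacle here; the statement is a direct specialization of Theorem~\ref{Min} to $n=6$, and the only step is the degree-sum count. (As a sanity check, the bound is tight: a signature whose underlying edge set is a $6$-cycle or two disjoint triangles has all vertices of degree $2$, and in the former case is already known to be minimal.)
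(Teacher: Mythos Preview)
Your proof is correct and follows exactly the route the paper intends: the paper states the corollary as an immediate consequence of Theorem~\ref{Min} without writing out a proof, and your handshake-lemma computation is precisely the one-line detail that makes ``max degree $2$ on $6$ vertices implies at most $6$ edges'' explicit. Nothing more is needed.
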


Proposition~\ref{Distinct SG} tells us that $K_6$ has $2^{10}$ distinct signed graphs. But in some respect, only 16 of them are different. Basically, we want to find the different signatures on $K_6$ upto switching isomorphism. Corollary~\ref{minimal} suggests that, it is enough to find the non-isomorphic signatures on $K_6$ of size upto six and vertex degree is atmost two. Further, $K_6$ is vertex as well as edge-transitive. We use these facts to find distinct automorphic type signatures of various sizes of $K_6$ in the following lemmas.

We denote a signature of size zero by $\Sigma_0$. We know that $K_6$ is edge-transitive, so all signatures of size one are automorphic and we denote this automorphic type signature by $\Sigma_1(u_1u_2)$.

\begin{lemm} \label{AT2}
The number of distinct automorphic type signatures of $K_6$ of size two is 2. 
\end{lemm}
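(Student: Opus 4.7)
The plan is to classify unordered pairs of edges in $K_6$ under the natural action of $\text{Aut}(K_6) = S_6$, and show there are exactly two orbits.

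First, I would invoke edge-transitivity of $K_6$ to fix, without loss of generality, one of the two edges of $\Sigma$ to be $u_1u_2$. The second edge $e$ then falls into one of two geometric cases depending on $d_G(u_1u_2, e)$: either $e$ shares an endpoint with $u_1u_2$ (so $d_G=1$, e.g.\ $e = u_1u_3$), or $e$ is disjoint from $u_1u_2$ (so $d_G=2$, e.g.\ $e = u_3u_4$). These two cases cover all possibilities, giving at most two automorphic types, represented by the signatures $\Sigma_2^{(1)} = \{u_1u_2, u_1u_3\}$ and $\Sigma_2^{(2)} = \{u_1u_2, u_3u_4\}$.

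Next I would show that each case really is a single orbit. For the adjacent case, any two adjacent edges form a path $P_3$; since $\text{Aut}(K_6)=S_6$ acts transitively on ordered triples of distinct vertices, there is an automorphism mapping any $P_3$ to any other, so all size-2 signatures with adjacent edges are automorphic. For the disjoint case, any two disjoint edges form a matching $M_2$ on four distinct vertices; the $S_6$-action on ordered $4$-tuples of distinct vertices is transitive, so any matching of size 2 can be mapped to any other by an automorphism of $K_6$.

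Finally, I would verify that the two representatives are not automorphic to each other. This is immediate because any automorphism $f$ of $K_6$ preserves the distance between edges: if $\Sigma_2^{(1)}$ and $\Sigma_2^{(2)}$ were automorphic, then $f$ would map a pair of edges sharing a vertex to a pair of disjoint edges, which is impossible since $f$ preserves vertex-incidence. Hence the number of distinct automorphic type signatures of size two is exactly $2$, namely $\Sigma_2(u_1u_2, u_1u_3)$ and $\Sigma_2(u_1u_2, u_3u_4)$.

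The argument is essentially a short orbit-counting exercise, and I do not expect a real obstacle; the only thing to be careful about is to phrase the transitivity statements in the language the paper has set up (edge-transitivity plus the remark that any two isomorphic subgraphs of $K_6$ are related by an automorphism of $K_6$), rather than invoking $S_6$-actions on tuples directly.
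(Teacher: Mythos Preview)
Your proposal is correct and follows essentially the same approach as the paper: split the two-edge signatures into the adjacent (path $P_3$) case and the disjoint (matching $M_2$) case, observe that each case is a single $\mathrm{Aut}(K_6)$-orbit, and note that the two cases cannot be automorphic since automorphisms preserve vertex-incidence. The only cosmetic differences are your choice of representative $\{u_1u_2,u_1u_3\}$ versus the paper's $\{u_1u_2,u_2u_3\}$ (called $\Sigma_2$), and that you spell out the transitivity and non-automorphism steps a bit more explicitly than the paper does.
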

\begin{proof}
For a signature of size two in $K_6$, followings are the only possibilities.
\begin{enumerate}
\item[(i)] Edges of the signature form a path of length two. One of such signatures is $\Sigma_2(u_1u_2, u_2u_3)$.
\item[(ii)] Edges of the signature are at distance two. One of such signatures is $\Sigma_3(u_1u_2, u_3u_4)$.
\end{enumerate}
 It is easy to see that, any other signature of $K_6$ of size two is either automorphic to $\Sigma_2$ or $\Sigma_3$, and that $\Sigma_2$ is not automorphic to $\Sigma_3$. This proves the lemma.
\end{proof}

\begin{lemm}\label{AT3}
The number of distinct automorphic type signatures of $K_6$ of size three is 4.
\end{lemm}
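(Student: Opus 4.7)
The plan is to combine two observations. First, by Theorem~\ref{Min} and Corollary~\ref{minimal}, it suffices to classify signatures $\Sigma$ whose edge graph $G_\Sigma$ has maximum degree at most $\lfloor (6-1)/2 \rfloor = 2$. Second, since $\mathrm{Aut}(K_6) = S_6$ acts as the full symmetric group on the vertex set, two 3-edge subsets of $E(K_6)$ lie in the same automorphic type class if and only if they are isomorphic as abstract graphs (after ignoring isolated vertices).

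With these reductions in hand, the task becomes the purely combinatorial one of listing all graphs with exactly three edges, maximum degree at most two, and at most six non-isolated vertices. A graph of maximum degree at most two is a disjoint union of paths and cycles, so with only three edges the possible shapes are exactly: a triangle $C_3$ on three vertices; a path $P_4$ of length three on four vertices; the disjoint union $P_3 \cup P_2$ on five vertices; and the perfect matching $3P_2$ on six vertices. The claw $K_{1,3}$ is ruled out by the degree bound, and $C_4$, $C_3 \cup P_2$, $2P_3$ are all ruled out by either too many edges or the wrong vertex count.

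I would then exhibit explicit representatives, mirroring the style of Lemma~\ref{AT2}: for example $\Sigma_4(u_1u_2, u_2u_3, u_1u_3)$, $\Sigma_5(u_1u_2, u_2u_3, u_3u_4)$, $\Sigma_6(u_1u_2, u_2u_3, u_4u_5)$, and $\Sigma_7(u_1u_2, u_3u_4, u_5u_6)$. That these four lie in distinct automorphic type classes is immediate from isomorphism invariants: only $\Sigma_4$ contains a triangle; $\Sigma_5$ is the only connected triangle-free representative; and $\Sigma_6$ and $\Sigma_7$ are distinguished by the size of their largest component (three versus two).

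The main obstacle, such as it is, is purely one of bookkeeping: making sure the enumeration of graphs with three edges and $\Delta \le 2$ is complete. Invoking the classical structure theorem that such graphs are disjoint unions of paths and cycles reduces this to a short arithmetic check. The one point that deserves attention is remembering to use the degree restriction to discard $K_{1,3}$; otherwise the count would read five rather than four.
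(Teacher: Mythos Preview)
Your argument is correct and follows essentially the same route as the paper: restrict to maximum degree at most two, then enumerate the isomorphism types of three-edge graphs (triangle, $P_4$, $P_3\cup P_2$, $3P_2$), noting that the action of $\mathrm{Aut}(K_6)=S_6$ makes automorphic type coincide with abstract isomorphism type. One practical point: the paper assigns the labels differently---there $\Sigma_4$ is the path, $\Sigma_5$ is $P_3\cup P_2$, $\Sigma_6$ is the perfect matching, and $\Sigma_7$ is the triangle---and these specific labels are used later (e.g.\ in the observation $\Sigma_6\sim\Sigma_{17}$), so your representatives should be renumbered to match.
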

\begin{proof}
For a signature of size three in $K_6$, followings are the only possibilities.
\begin{enumerate}
\item[(i)] Edges of the signature form a path of length three. One of such signatures is $\Sigma_4(u_1u_2, u_2u_3, u_3u_4)$.
\item[(ii)] Two edges form a path and the third edge is at distance two from that path. One of such signatures is $\Sigma_5(u_1u_2, u_2u_3, u_4u_5)$.
\item[(iii)] All three edges are pairwise at distance two. One of such signatures is $\Sigma_6(u_1u_2, u_6u_3, u_5u_4)$.
\item[(iv)] The three edges of the signature form a cycle. One of such signature is $\Sigma_7(u_1u_2, u_2u_3, u_3u_1)$.
\end{enumerate}
It is clear that any other signature of size three of $K_6$ is automorphic to one of $\Sigma_4$, $\Sigma_5$, $\Sigma_6$, or $\Sigma_7$. Further, these four signatures are pairwise non-automorphic. This proves the lemma.
\end{proof}

\begin{lemm}\label{AT4}
The number of distinct automorphic type signatures of $K_6$ of size four is 5.
\end{lemm}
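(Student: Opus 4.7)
The plan is to reduce the problem to a purely combinatorial enumeration of isomorphism types of sparse subgraphs of $K_6$. By Theorem~\ref{Min}, every signature of $K_6$ is equivalent to one in which the edge subgraph $G_\Sigma$ has maximum degree at most $\lfloor 5/2 \rfloor = 2$, so it suffices to consider signatures with this property. Moreover, because $\mathrm{Aut}(K_6) = S_6$ acts as the full symmetric group on the six vertices, two edge sets $\Sigma_a,\Sigma_b \subseteq E(K_6)$ are automorphic signatures if and only if $G_{\Sigma_a}$ and $G_{\Sigma_b}$ are isomorphic as abstract graphs. Hence the lemma reduces to counting isomorphism classes of graphs on at most six vertices with exactly four edges and maximum degree at most two.

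A graph of maximum degree at most two is a vertex-disjoint union of paths and cycles, where a $P_k$ component contributes $k-1$ edges on $k$ vertices and a $C_k$ component contributes $k$ edges on $k$ vertices. I would first enumerate, via the integer partitions of $4$, all possible multisets of such components, obtaining the candidate list $P_5$, $C_4$, $P_4+P_2$, $C_3+P_2$, $P_3+P_3$, $P_3+2P_2$, and $4P_2$. Next I would discard those that do not fit inside six vertices: $P_3+2P_2$ requires seven vertices and $4P_2$ requires eight, so neither embeds in $K_6$; the remaining five types all do.

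To finish, I would exhibit an explicit signature on $\{u_1,\dots,u_6\}$ realising each of the five surviving types (for instance $\Sigma(u_1u_2,u_2u_3,u_3u_4,u_4u_5)$ for $P_5$, $\Sigma(u_1u_2,u_2u_3,u_3u_4,u_4u_1)$ for $C_4$, $\Sigma(u_1u_2,u_2u_3,u_3u_4,u_5u_6)$ for $P_4+P_2$, $\Sigma(u_1u_2,u_2u_3,u_3u_1,u_4u_5)$ for $C_3+P_2$, and $\Sigma(u_1u_2,u_2u_3,u_4u_5,u_5u_6)$ for $P_3+P_3$) and then observe that the five types are pairwise non-automorphic because any automorphism of $K_6$ preserves the multiset of connected-component isomorphism types of the edge subgraph. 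I anticipate no serious obstacle; the only care needed is to exhaust the partitions of $4$ and verify the vertex budget, mirroring the style of Lemmas~\ref{AT2} and~\ref{AT3}.
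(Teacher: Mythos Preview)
Your proposal is correct and follows essentially the same approach as the paper: both restrict to signatures whose edge subgraph has maximum degree at most two (using Theorem~\ref{Min} and the setup preceding the lemmas), then enumerate the isomorphism types of four-edge subgraphs of $K_6$ with this degree bound, arriving at the same five types $P_5$, $C_4$, $P_4+P_2$, $C_3+P_2$, $P_3+P_3$ and explicit representatives. Your presentation is slightly more systematic---organising the case analysis via integer partitions of $4$ and explicitly invoking $\mathrm{Aut}(K_6)=S_6$ to reduce automorphic equivalence to abstract graph isomorphism---but the substance is the same.
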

\begin{proof}
For a signature of size three in $K_6$, followings are the only possibilities.
\begin{enumerate}

\item[(i)] All four edges of the signature form a path. One of such signatures is $\Sigma_8(u_1u_2, u_2u_3, u_3u_4, u_4u_5)$.
\item[(ii)] Three edges of the signature form a path and the remaining edge is at distance two from this path. One of such signatures is $\Sigma_9(u_1u_2, u_2u_3, u_3u_4, u_5u_6)$.
\item[(iii)] Two edges of the signature lie on a path and other two edges lie on an another path, disjoint from the first path. One of such signatures is $\Sigma_{10}(u_1u_2, u_6u_1, u_3u_4, u_4u_5)$.
\item[(iv)] Three edges of the signature form a cycle and the remaining edge is at distance two from that cycle. One of such signatures is $\Sigma_{11}(u_1u_2, u_2u_3, u_3u_1, u_5u_6)$.
\item[(v)] All four edges of the signature form a cycle. One of such signatures is $\Sigma_{12}(u_1u_2, u_2u_3, u_3u_6, u_6u_1)$.
\end{enumerate}
It is easy to see that any other signature of size four in $K_6$ is automorphic to one of $\Sigma_{8}$, $\Sigma_{9}$, $\Sigma_{10}$, $\Sigma_{11}$ or $\Sigma_{12}$. Further, these five signatures are pairwise non-automorphic. This proves the lemma.
\end{proof}

\begin{lemm}\label{AT5}
The number of distinct automorphic type signatures of $K_6$ of  size five is 4.
\end{lemm}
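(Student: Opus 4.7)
The plan is to mirror the strategy used in Lemmas~\ref{AT2}, \ref{AT3} and \ref{AT4}: enumerate the possible isomorphism types of a five-edge subgraph of $K_6$, write down an explicit representative for each type, and then use the vertex- and edge-transitivity of $\mathrm{Aut}(K_6) = S_6$ to conclude that these representatives exhaust all automorphism classes and are pairwise non-automorphic.

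First I would invoke Theorem~\ref{Min} (via Corollary~\ref{minimal}) to restrict attention to signatures whose underlying subgraph $G_{\Sigma}$ has maximum degree at most $\lfloor (6-1)/2 \rfloor = 2$. Such a $G_{\Sigma}$ must be a disjoint union of paths and cycles totalling five edges and sitting inside six vertices. Recalling that a path with $k$ edges uses $k+1$ vertices while a cycle with $k$ edges uses $k$ vertices, a short case analysis on the partitions of $5$ into path/cycle edge counts shows that the only shapes fitting in six vertices are (a) the Hamiltonian path $P_6$, (b) the pentagon $C_5$ (with one isolated vertex), (c) the disjoint union $C_4+P_2$, and (d) the disjoint union $C_3+P_3$. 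Every other candidate, such as $P_5+P_2$, $P_4+P_3$, $P_4+P_2+P_2$ or $C_3+P_2+P_2$, demands at least seven vertices and so cannot be realised inside $K_6$.

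Next I would choose concrete representatives, say $\Sigma_{13}(u_1u_2,u_2u_3,u_3u_4,u_4u_5,u_5u_6)$, $\Sigma_{14}(u_1u_2,u_2u_3,u_3u_4,u_4u_5,u_5u_1)$, $\Sigma_{15}(u_1u_2,u_2u_3,u_3u_4,u_4u_1,u_5u_6)$ and $\Sigma_{16}(u_1u_2,u_2u_3,u_3u_1,u_4u_5,u_5u_6)$, after which the rest is bookkeeping. Since $S_6$ acts as the full symmetric group on $V(K_6)$, any two subgraphs of $K_6$ that are abstractly isomorphic are carried onto one another by some element of $\mathrm{Aut}(K_6)$, so every admissible size-five signature is automorphic to exactly one of $\Sigma_{13},\dots,\Sigma_{16}$. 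Conversely, an automorphism of $K_6$ preserves the abstract structure of the edge support, so signatures with non-isomorphic supports cannot be automorphic; the four supports above are clearly pairwise non-isomorphic, distinguished by their component-size multisets and by which of $C_3$, $C_4$, $C_5$ they contain. The only step that really requires care is making the enumeration of partitions in the middle paragraph exhaustive; once that list is nailed down, both transitivity arguments are essentially mechanical.
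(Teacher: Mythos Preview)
Your proof is correct and follows essentially the same route as the paper: both enumerate the five-edge, maximum-degree-two subgraphs of $K_6$ as the Hamiltonian path, $C_5$, $C_4$ plus an edge, and $C_3$ plus a $2$-path, exhibit explicit representatives, and appeal to $\mathrm{Aut}(K_6)=S_6$ for the automorphism-type conclusion. The only discrepancy is cosmetic: the paper assigns $\Sigma_{14}$ to $C_4+\text{edge}$, $\Sigma_{15}$ to $C_3+P_3$, and $\Sigma_{16}$ to $C_5$, so your labels $\Sigma_{14},\Sigma_{15},\Sigma_{16}$ are a cyclic permutation of the paper's, which matters downstream where these names are reused in the switching-isomorphism reductions and in Table~\ref{negative cycles}.
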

\begin{proof}
It is easy to see that any subgraph of $K_6$ having five edges and having maximum degree two cannot have two disjoint paths of length 1 and 4 or 2 and 3. Therefore for a signature of size five in $K_6$, followings are the only possibilities.
\begin{enumerate}
\item[(i)] All the edges of the signature form a path of length five. One of such signatures is \linebreak[4] $\Sigma_{13}(u_1u_2, u_2u_3, u_3u_4, u_4u_5, u_5u_6)$.
\item[(ii)] Four edges of the signature form a cycle and the remaining edge is just a path of length one and disjoint from that cycle. One of such signatures is $\Sigma_{14}(u_1u_2, u_2u_3, u_3u_4, u_4u_1, u_5u_6)$.
\item[(iii)] Three edges of the signature form a cycle and the remaining two edges form a path of length two. One of such signatures is $\Sigma_{15}(u_1u_2, u_2u_3, u_3u_1, u_4u_6, u_5u_6)$.
\item[(iv)] All edges of the signature form a cycle. One of such signatures is $\Sigma_{16}(u_1u_2, u_2u_3, u_3u_4, u_4u_5, u_5u_1)$.
\end{enumerate}
It is clear that any other signature of size five of $K_6$ is automorphic to one of $\Sigma_{13}$, $\Sigma_{14}$, $\Sigma_{15}$ or $\Sigma_{16}$. Also, these four signatures are pairwise non-automorphic. This proves the lemma.
\end{proof}

\begin{lemm} \label{AT6}
The number of distinct automorphic type signatures of $K_6$ of size six is 2.
\end{lemm}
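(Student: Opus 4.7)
The plan is to first reduce the problem to classifying $2$-regular spanning subgraphs of $K_6$, and then enumerate them combinatorially. By Theorem~\ref{Min}, any signature we consider may be taken to have maximum degree at most $\lfloor (6-1)/2 \rfloor = 2$. If such a signature has exactly $6$ edges, then the sum of its vertex degrees is $12$, while there are only $6$ vertices each of degree at most $2$. Hence every vertex must have degree exactly $2$, so the subgraph $G_\Sigma$ is a $2$-regular spanning subgraph of $K_6$.

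Next I would classify $2$-regular graphs on $6$ vertices. Any $2$-regular graph is a disjoint union of cycles, so the edge set is determined by a partition of $6$ into parts of size at least $3$ (since $K_6$ is simple, cycles of length $1$ or $2$ are disallowed). The only such partitions of $6$ are $6 = 6$ and $6 = 3 + 3$. This gives exactly two structural possibilities:
\begin{enumerate}
    \item[(i)] All six edges of the signature form a Hamilton cycle. A representative is $\Sigma_{17}(u_1u_2, u_2u_3, u_3u_4, u_4u_5, u_5u_6, u_6u_1)$.
    \item[(ii)] The six edges form two disjoint triangles. A representative is $\Sigma_{18}(u_1u_2, u_2u_3, u_3u_1, u_4u_5, u_5u_6, u_6u_4)$.
\end{enumerate}

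The final step is to check that these two classes are genuinely non-automorphic and that each class is a single automorphic type. Between the classes, the underlying unsigned subgraphs $C_6$ and $C_3 \cup C_3$ are non-isomorphic, so no automorphism of $K_6$ can map one signature to the other. Within each class, because $\mathrm{Aut}(K_6) = S_6$ acts transitively on the set of Hamilton cycles of $K_6$ and also on the set of partitions of $V(K_6)$ into two triples, any two signatures of type (i) are automorphic, and the same holds for type (ii). This gives exactly $2$ distinct automorphic types, completing the proof. The only conceptual step requiring care is the partition enumeration, which is routine; the rest follows directly from Theorem~\ref{Min} and the transitivity properties of $S_6$.
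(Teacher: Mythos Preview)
Your proof is correct and follows essentially the same approach as the paper: both reduce to classifying subgraphs of $K_6$ with six edges and maximum degree two, identify the two possibilities $C_6$ and $C_3 \cup C_3$, and observe that these are non-automorphic while each forms a single orbit under $\mathrm{Aut}(K_6)=S_6$. Your version is simply more explicit, supplying the degree-sum argument forcing $2$-regularity and the partition-of-$6$ enumeration that the paper compresses into ``it is easy to see.''
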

\begin{proof}
It is easy to see that a subgraph of $K_6$ having six edges and having maximum degree two is either a spanning cycle or union of two 3-cycles. One of such signatures whose edges form a spanning cycle is $\Sigma_{17}(u_1u_2, u_2u_3, u_3u_4, u_4u_5, u_5u_6, u_6u_1)$. Again, one of such signatures whose edges form two disjoint 3-cycles is $\Sigma_{18}(u_1u_2, u_2u_3, u_3u_1, u_4u_5, u_5u_6, u_6u_4)$. It is clear that any signature of size six of $K_6$ is automorphic to one of $\Sigma_{17}$ or $\Sigma_{18}$. These two signatures are non-automorphic too. This proves the lemma.
\end{proof}

The signatures obtained in the previous five lemmas along with $\Sigma_0$ and $\Sigma_1$ give us 19 distinct automorphic type signatures of $K_6$, \textit{viz.}, $\Sigma_0, \Sigma_1, \ldots, \Sigma_{18}$. Notice that any two signatures belonging to any one of $\{\Sigma_0, \Sigma_1\}$, $\{\Sigma_2, \Sigma_3\}$, $\{\Sigma_4, \Sigma_5, \Sigma_6, \Sigma_7\}$, $\{\Sigma_8, \Sigma_9, \Sigma_{10}, \Sigma_{11}, \Sigma_{12}\}$, $\{\Sigma_{13}, \Sigma_{14}, \Sigma_{15}, \Sigma_{16}\}$ or $\{\Sigma_{17}, \Sigma_{18}\}$ are not automorphic. However, two among these 19 signatures may be switching isomorphic to each other. We have the following observations.

\begin{itemize}
\item In $\Sigma_{6}$, by resigning at $\{u_{2}, u_{3}, u_{4}\}$, we get a signature which is automorphic to $\Sigma_{17}$. Thus $\Sigma_{6}$ is switching isomorphic to $\Sigma_{17}$, that is, $\Sigma_{6}$ $\sim$ $\Sigma_{17}$.
\item In $\Sigma_{10}$, by resigning at $\{u_{1}, u_{3}, u_{5}\}$, we get a signature which is auotomprphic to $\Sigma_{14}$. Thus $\Sigma_{10}$ is switching isomorphic to $\Sigma_{14}$, that is, $\Sigma_{10}$ $\sim$ $\Sigma_{14}$.
\item In $\Sigma_{13}$, by resigning at $\{u_{2}, u_{4}, u_{6}\}$, we get a signature which is automorphic to $\Sigma_{9}$. Thus $\Sigma_{9}$ is switching isomorphic to $\Sigma_{13}$, that is, $\Sigma_{9}$ $\sim$ $\Sigma_{13}$.
\end{itemize}
Thus we are left with the signatures $\Sigma_{0}$, $\Sigma_{1}$, $\Sigma_{2}$, $\Sigma_{3}$, $\Sigma_{4}$, $\Sigma_{5}$, $\Sigma_{6}$, $\Sigma_{7}$, $\Sigma_{8}$, $\Sigma_{9}$, $\Sigma_{10}$, $\Sigma_{11}$, $\Sigma_{12}$, $\Sigma_{15}$, $\Sigma_{16}$ and $\Sigma_{18}$, and their corresponding signified graphs are shown in Figure~\ref{DS62}. Now we show that no two of these 16 signatures are switching isomorphic. 

\begin{theorem}
There are exactly 16 different signatures on $K_6$ upto switching isomorphism.
\end{theorem}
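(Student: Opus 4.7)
The plan is, first, to observe that the five lemmas (Lemma~\ref{AT2} through Lemma~\ref{AT6}) together with Corollary~\ref{minimal} and the three explicit switching isomorphisms listed just before the theorem already guarantee that every signature on $K_6$ is switching-isomorphic to one of the 16 representatives $\Sigma_0,\Sigma_1,\Sigma_2,\Sigma_3,\Sigma_4,\Sigma_5,\Sigma_6,\Sigma_7,\Sigma_8,\Sigma_9,\Sigma_{10},\Sigma_{11},\Sigma_{12},\Sigma_{15},\Sigma_{16},\Sigma_{18}$. Hence the entire remaining content of the theorem is the lower bound, namely that these 16 representatives are pairwise non-switching-isomorphic.

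To establish the lower bound I would introduce the following numerical invariant. For a signature $\Sigma$ on $K_6$ and each $k\in\{3,4,5,6\}$, let $n_k(\Sigma)$ be the number of $k$-cycles of $K_6$ that are negative in $(K_6,\Sigma)$. By Theorem~\ref{Signature}, two switching-equivalent signatures share the same set of negative cycles and hence agree on every $n_k$; and because every automorphism of $K_6$ sends $k$-cycles bijectively to $k$-cycles while preserving the sign of each cycle, $n_k$ is also invariant under isomorphism. Therefore the 4-tuple $\bigl(n_3(\Sigma),n_4(\Sigma),n_5(\Sigma),n_6(\Sigma)\bigr)$ is a switching-isomorphism invariant, and it suffices to exhibit that the 16 representatives give rise to 16 distinct 4-tuples.

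The remaining work is a finite computation organised by the size of the signature. Using the parity rule that a cycle $C$ is negative in $(K_6,\Sigma)$ iff $|E(C)\cap\Sigma|$ is odd, and the small size of each $\Sigma$ (at most $6$), one counts for each representative and each $k$ the number of $k$-cycles of $K_6$ meeting $\Sigma$ in an odd number of edges. Two easy sanity checks are $(n_3,n_4,n_5,n_6)(\Sigma_0)=(0,0,0,0)$ and $(n_3,n_4,n_5,n_6)(\Sigma_1)=(4,12,24,24)$; these already distinguish the two signatures of size at most $1$, and a similar direct count distinguishes the two size-$2$ representatives (e.g.\ $n_3(\Sigma_2)=6$ whereas $n_3(\Sigma_3)=8$, since the two edges of $\Sigma_3$ are vertex-disjoint).

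The main obstacle is simply the bookkeeping inside the size-$3$ block $\{\Sigma_4,\Sigma_5,\Sigma_6,\Sigma_7\}$ and especially inside the size-$4$ block $\{\Sigma_8,\Sigma_9,\Sigma_{10},\Sigma_{11},\Sigma_{12}\}$, where the 4-tuples may be closest; across different sizes one also has to rule out any accidental coincidence, as for instance $\Sigma_6$ of nominal size $3$ is already known to be switching-equivalent to $\Sigma_{17}$ of size $6$, so the nominal size cannot be trusted as a separator. If the 4-tuple failed to separate some pair, one could refine it by using further switching-isomorphism invariants, for example the frustration index (the minimum size of an equivalent signature) or the multiset of vertex-degrees of $G_{\Sigma'}$ for a minimal equivalent signature $\Sigma'$; but I expect the 4-tuple $(n_3,n_4,n_5,n_6)$ already to distinguish all 16 representatives, so the proof reduces to tabulating these counts and checking distinctness entry-wise.
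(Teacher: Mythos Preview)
Your approach is essentially the same as the paper's: after reducing to the 16 representatives via the lemmas and the three listed switchings, the paper distinguishes them by the switching-isomorphism invariant given by the counts of negative cycles, exactly as you propose. In fact the paper tabulates only the triple $(|C_3^-|,|C_4^-|,|C_5^-|)$ for each of the 16 signatures and observes that all 16 triples are distinct, so $n_6$ is not needed and your fallback invariants are unnecessary.
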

\begin{proof}
Let the number of negative 3-cycles, number of negative 4-cycles and number of negative 5-cycles of a signified graph $(K_6, \Sigma)$ be denoted by $|C_3^{-}|$, $|C_4^{-}|$, and $|C_5^{-}|$, respectively. These numbers for the sixteen signed $K_6$ depicted in Figure~\ref{DS62} are given in Table~\ref{negative cycles}.

\begin{table}[h]
\begin{center}
\begin{tabular}{|c|c|c|c|c|c|c|c|c|c|c|c|c|c|c|c|c|}
\hline
  & $\Sigma_{0}$ & $\Sigma_{1}$ & $\Sigma_{2}$ & $\Sigma_{3}$ & $\Sigma_{4}$ & $\Sigma_{5}$ & $\Sigma_{6}$ & $\Sigma_{7}$ & $\Sigma_{8}$ & $\Sigma_{9}$ & $\Sigma_{10}$ & $\Sigma_{11}$ & $\Sigma_{12}$ & $\Sigma_{15}$ & $\Sigma_{16}$ & $\Sigma_{18}$\\
\hline
$|C_{3}^{-}|$ & 0 & 4 & 6 & 8 & 8 & 10 & 12 & 10 & 10 & 12 & 12 & 14 & 8 & 16 & 10 & 20\\
\hline
$|C_{4}^{-}|$ & 0 & 12 & 18 & 20 & 24 & 22 & 24 & 18 & 26 & 24 & 20 & 18 & 24 & 12 & 30 & 0\\
\hline
$|C_{5}^{-}|$ & 0 & 24 & 24 & 32 & 40 & 36 & 24 & 36 & 36 & 32 & 40 & 36 & 48 & 48 & 36 & 72\\
\hline

\end{tabular}
\end{center}
\caption{Number of negative 3-cycles, 4-cycles and 5-cycles in different signed $K_6$.}
\label{negative cycles}
\end{table}

From Table~\ref{negative cycles} and Theorem~\ref{Signature}, it is easy to see that all the signatures depicted in Figure~\ref{DS62} are pairwise non-isomorphic.
This concludes the proof of the theorem.
\end{proof}

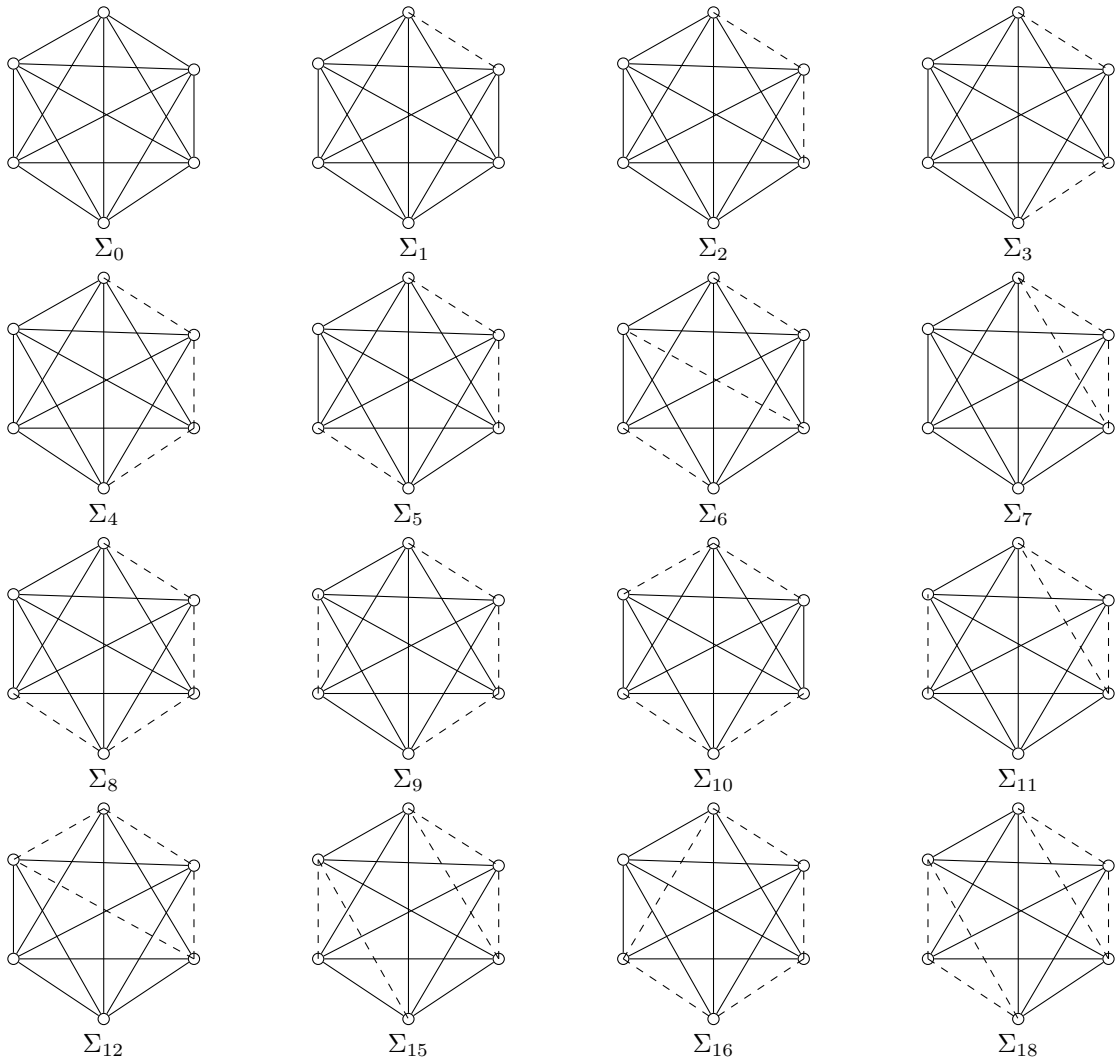
\begin{figure}[h]
\begin{subfigure}{0.24\textwidth}
\begin{tikzpicture}[scale=0.4]
\node[vertex] (v1) at (12,7) {};
\node[vertex] (v2) at (15,5.1) {};
\node[vertex] (v3) at (15,2) {};
\node[vertex] (v4) at (12,0) {};
\node [below] at (12.2,-0.2) {$\Sigma_{0}$};
\node[vertex] (v5) at (9,2) {};
\node[vertex] (v6) at (9,5.3) {};

\foreach \from/\to in {v1/v2,v2/v3,v3/v4,v4/v5,v5/v6,v1/v6,v1/v3,v1/v4,v1/v5,v2/v4,v2/v5,v2/v6,v3/v5,v3/v6,v4/v6} \draw (\from) -- (\to);

\end{tikzpicture}
\end{subfigure}
\hfill
\begin{subfigure}{0.24\textwidth}
\begin{tikzpicture}[scale=0.4]
\node[vertex] (v1) at (12,7) {};
\node[vertex] (v2) at (15,5.1) {};
\node[vertex] (v3) at (15,2) {};
\node[vertex] (v4) at (12,0) {};
\node [below] at (12.2,-0.2) {$\Sigma_{1}$};
\node[vertex] (v5) at (9,2) {};
\node[vertex] (v6) at (9,5.3) {};

\foreach \from/\to in {v2/v3,v3/v4,v4/v5,v5/v6,v1/v6,v1/v3,v1/v4,v1/v5,v2/v4,v2/v5,v2/v6,v3/v5,v3/v6,v4/v6} \draw (\from) -- (\to);

\draw [dashed] (12,7) -- (15,5.1);

\end{tikzpicture}
\end{subfigure}
\hfill
\begin{subfigure}{0.24\textwidth}
\begin{tikzpicture}[scale=0.4]
\node[vertex] (v1) at (12,7) {};
\node[vertex] (v2) at (15,5.1) {};
\node[vertex] (v3) at (15,2) {};
\node[vertex] (v4) at (12,0) {};
\node [below] at (12,-0.2) {$\Sigma_{2}$};
\node[vertex] (v5) at (9,2) {};
\node[vertex] (v6) at (9,5.3) {};

\foreach \from/\to in {v3/v4,v4/v5,v5/v6,v1/v6,v1/v3,v1/v4,v1/v5,v2/v4,v2/v5,v2/v6,v3/v5,v3/v6,v4/v6} \draw (\from) -- (\to);

\draw [dashed] (12,7) -- (15,5.1);
\draw [dashed] (15,2) -- (15,5.1);

\end{tikzpicture}
\end{subfigure}
\hfill
\begin{subfigure}{0.24\textwidth}
\begin{tikzpicture}[scale=0.4]
\node[vertex] (v1) at (12,7) {};
\node[vertex] (v2) at (15,5.1) {};
\node[vertex] (v3) at (15,2) {};
\node[vertex] (v4) at (12,0) {};
\node [below] at (12,-0.2) {$\Sigma_{3}$};
\node[vertex] (v5) at (9,2) {};
\node[vertex] (v6) at (9,5.3) {};

\foreach \from/\to in {v2/v3,v4/v5,v5/v6,v1/v6,v1/v3,v1/v4,v1/v5,v2/v4,v2/v5,v2/v6,v3/v5,v3/v6,v4/v6} \draw (\from) -- (\to);

\draw [dashed] (12,7) -- (15,5.1);
\draw [dashed] (15,2) -- (12,0);

\end{tikzpicture}
\end{subfigure}
\hfill
\begin{subfigure}{0.24\textwidth}
\begin{tikzpicture}[scale=0.4]
\node[vertex] (v1) at (12,7) {};
\node[vertex] (v2) at (15,5.1) {};
\node[vertex] (v3) at (15,2) {};
\node[vertex] (v4) at (12,0) {};
\node [below] at (12,-0.2) {$\Sigma_{4}$};
\node[vertex] (v5) at (9,2) {};
\node[vertex] (v6) at (9,5.3) {};

\foreach \from/\to in {v4/v5,v5/v6,v1/v6,v1/v3,v1/v4,v1/v5,v2/v4,v2/v5,v2/v6,v3/v5,v3/v6,v4/v6} \draw (\from) -- (\to);

\draw [dashed] (12,7) -- (15,5.1);
\draw [dashed] (15,2) -- (12,0);
\draw [dashed] (15,2) -- (15,5.1);

\end{tikzpicture}
\end{subfigure}
\hfill
\begin{subfigure}{0.24\textwidth}
\begin{tikzpicture}[scale=0.4]
\node[vertex] (v1) at (12,7) {};
\node[vertex] (v2) at (15,5.1) {};
\node[vertex] (v3) at (15,2) {};
\node[vertex] (v4) at (12,0) {};
\node [below] at (12,-0.2) {$\Sigma_{5}$};
\node[vertex] (v5) at (9,2) {};
\node[vertex] (v6) at (9,5.3) {};

\foreach \from/\to in {v3/v4,v5/v6,v1/v6,v1/v3,v1/v4,v1/v5,v2/v4,v2/v5,v2/v6,v3/v5,v3/v6,v4/v6} \draw (\from) -- (\to);

\draw [dashed] (12,7) -- (15,5.1);
\draw [dashed] (9,2) -- (12,0);
\draw [dashed] (15,2) -- (15,5.1);

\end{tikzpicture}
\end{subfigure}
\hfill
\begin{subfigure}{0.24\textwidth}
\begin{tikzpicture}[scale=0.4]
\node[vertex] (v1) at (12,7) {};
\node[vertex] (v2) at (15,5.1) {};
\node[vertex] (v3) at (15,2) {};
\node[vertex] (v4) at (12,0) {};
\node [below] at (12,-0.2) {$\Sigma_{6}$};
\node[vertex] (v5) at (9,2) {};
\node[vertex] (v6) at (9,5.3) {};

\foreach \from/\to in {v2/v3,v3/v4,v5/v6,v1/v6,v1/v3,v1/v4,v1/v5,v2/v4,v2/v5,v2/v6,v3/v5,v4/v6} \draw (\from) -- (\to);

\draw [dashed] (12,7) -- (15,5.1);
\draw [dashed] (9,2) -- (12,0);
\draw [dashed] (15,2) -- (9,5.3);

\end{tikzpicture}
\end{subfigure}
\hfill
\begin{subfigure}{0.24\textwidth}
\begin{tikzpicture}[scale=0.4]
\node[vertex] (v1) at (12,7) {};
\node[vertex] (v2) at (15,5.1) {};
\node[vertex] (v3) at (15,2) {};
\node[vertex] (v4) at (12,0) {};
\node [below] at (12,-0.2) {$\Sigma_{7}$};
\node[vertex] (v5) at (9,2) {};
\node[vertex] (v6) at (9,5.3) {};

\foreach \from/\to in {v3/v4,v4/v5,v5/v6,v1/v6,v1/v4,v1/v5,v2/v4,v2/v5,v2/v6,v3/v5,v3/v6,v4/v6} \draw (\from) -- (\to);

\draw [dashed] (12,7) -- (15,5.1);
\draw [dashed] (12,7) -- (15,2);
\draw [dashed] (15,2) -- (15,5.1);

\end{tikzpicture}
\end{subfigure}
\hfill
\begin{subfigure}{0.24\textwidth}
\begin{tikzpicture}[scale=0.4]
\node[vertex] (v1) at (12,7) {};
\node[vertex] (v2) at (15,5.1) {};
\node[vertex] (v3) at (15,2) {};
\node[vertex] (v4) at (12,0) {};
\node [below] at (12,-0.2) {$\Sigma_{8}$};
\node[vertex] (v5) at (9,2) {};
\node[vertex] (v6) at (9,5.3) {};

\foreach \from/\to in {v5/v6,v1/v6,v1/v3,v1/v4,v1/v5,v2/v4,v2/v5,v2/v6,v3/v5,v3/v6,v4/v6} \draw (\from) -- (\to);

\draw [dashed] (12,7) -- (15,5.1);
\draw [dashed] (9,2) -- (12,0);
\draw [dashed] (15,2) -- (15,5.1);
\draw [dashed] (15,2) -- (12,0);

\end{tikzpicture}
\end{subfigure}
\hfill
\begin{subfigure}{0.24\textwidth}
\begin{tikzpicture}[scale=0.4]
\node[vertex] (v1) at (12,7) {};
\node[vertex] (v2) at (15,5.1) {};
\node[vertex] (v3) at (15,2) {};
\node[vertex] (v4) at (12,0) {};
\node [below] at (12,-0.2) {$\Sigma_{9}$};
\node[vertex] (v5) at (9,2) {};
\node[vertex] (v6) at (9,5.3) {};

\foreach \from/\to in {v4/v5,v1/v6,v1/v3,v1/v4,v1/v5,v2/v4,v2/v5,v2/v6,v3/v5,v3/v6,v4/v6} \draw (\from) -- (\to);

\draw [dashed] (12,7) -- (15,5.1);
\draw [dashed] (9,2) -- (9,5.3);
\draw [dashed] (15,2) -- (15,5.1);
\draw [dashed] (15,2) -- (12,0);

\end{tikzpicture}
\end{subfigure}
\hfill
\begin{subfigure}{0.24\textwidth}
\begin{tikzpicture}[scale=0.4]
\node[vertex] (v1) at (12,7) {};
\node[vertex] (v2) at (15,5.1) {};
\node[vertex] (v3) at (15,2) {};
\node[vertex] (v4) at (12,0) {};
\node [below] at (12,-0.2) {$\Sigma_{10}$};
\node[vertex] (v5) at (9,2) {};
\node[vertex] (v6) at (9,5.3) {};

\foreach \from/\to in {v2/v3,v5/v6,v1/v3,v1/v4,v1/v5,v2/v4,v2/v5,v2/v6,v3/v5,v3/v6,v4/v6} \draw (\from) -- (\to);

\draw [dashed] (12,7) -- (15,5.1);
\draw [dashed] (12,7) -- (9,5.3);
\draw [dashed] (15,2) -- (12,0);
\draw [dashed] (9,2) -- (12,0);

\end{tikzpicture}
\end{subfigure}
\hfill
\begin{subfigure}{0.24\textwidth}
\begin{tikzpicture}[scale=0.4]
\node[vertex] (v1) at (12,7) {};
\node[vertex] (v2) at (15,5.1) {};
\node[vertex] (v3) at (15,2) {};
\node[vertex] (v4) at (12,0) {};
\node [below] at (12,-0.2) {$\Sigma_{11}$};
\node[vertex] (v5) at (9,2) {};
\node[vertex] (v6) at (9,5.3) {};

\foreach \from/\to in {v3/v4,v4/v5,v1/v6,v1/v4,v1/v5,v2/v4,v2/v5,v2/v6,v3/v5,v3/v6,v4/v6} \draw (\from) -- (\to);

\draw [dashed] (12,7) -- (15,5.1);
\draw [dashed] (9,2) -- (9,5.3);
\draw [dashed] (15,2) -- (15,5.1);
\draw [dashed] (15,2) -- (12,7);

\end{tikzpicture}
\end{subfigure}
\hfill
\begin{subfigure}{0.24\textwidth}
\begin{tikzpicture}[scale=0.4]
\node[vertex] (v1) at (12,7) {};
\node[vertex] (v2) at (15,5.1) {};
\node[vertex] (v3) at (15,2) {};
\node[vertex] (v4) at (12,0) {};
\node [below] at (12,-0.2) {$\Sigma_{12}$};
\node[vertex] (v5) at (9,2) {};
\node[vertex] (v6) at (9,5.3) {};

\foreach \from/\to in {v3/v4,v4/v5,v5/v6,v1/v3,v1/v4,v1/v5,v2/v4,v2/v5,v2/v6,v3/v5,v4/v6} \draw (\from) -- (\to);

\draw [dashed] (12,7) -- (15,5.1);
\draw [dashed] (12,7) -- (9,5.3);
\draw [dashed] (15,2) -- (15,5.1);
\draw [dashed] (15,2) -- (9,5.3);

\end{tikzpicture}
\end{subfigure}
\hfill
\begin{subfigure}{0.24\textwidth}
\begin{tikzpicture}[scale=0.4]
\node[vertex] (v1) at (12,7) {};
\node[vertex] (v2) at (15,5.1) {};
\node[vertex] (v3) at (15,2) {};
\node[vertex] (v4) at (12,0) {};
\node [below] at (12,-0.2) {$\Sigma_{15}$};
\node[vertex] (v5) at (9,2) {};
\node[vertex] (v6) at (9,5.3) {};

\foreach \from/\to in {v4/v5,v1/v6,v1/v4,v1/v5,v2/v4,v2/v5,v2/v6,v3/v5,v3/v6,v3/v4} \draw (\from) -- (\to);

\draw [dashed] (12,7) -- (15,5.1);
\draw [dashed] (9,2) -- (9,5.3);
\draw [dashed] (15,2) -- (15,5.1);
\draw [dashed] (15,2) -- (12,7);
\draw [dashed] (9,5.3) -- (12,0);

\end{tikzpicture}
\end{subfigure}
\hfill
\begin{subfigure}{0.24\textwidth}
\begin{tikzpicture}[scale=0.4]
\node[vertex] (v1) at (12,7) {};
\node[vertex] (v2) at (15,5.1) {};
\node[vertex] (v3) at (15,2) {};
\node[vertex] (v4) at (12,0) {};
\node [below] at (12,-0.2) {$\Sigma_{16}$};
\node[vertex] (v5) at (9,2) {};
\node[vertex] (v6) at (9,5.3) {};

\foreach \from/\to in {v1/v6,v1/v3,v1/v4,v6/v5,v2/v4,v2/v5,v2/v6,v3/v5,v3/v6,v4/v6} \draw (\from) -- (\to);

\draw [dashed] (12,7) -- (15,5.1);
\draw [dashed] (15,2) -- (15,5.1);
\draw [dashed] (15,2) -- (12,0);
\draw [dashed] (9,2) -- (12,0);
\draw [dashed] (9,2) -- (12,7);

\end{tikzpicture}
\end{subfigure}
\hfill
\begin{subfigure}{0.24\textwidth}
\begin{tikzpicture}[scale=0.4]
\node[vertex] (v1) at (12,7) {};
\node[vertex] (v2) at (15,5.1) {};
\node[vertex] (v3) at (15,2) {};
\node[vertex] (v4) at (12,0) {};
\node [below] at (12,-0.2) {$\Sigma_{18}$};
\node[vertex] (v5) at (9,2) {};
\node[vertex] (v6) at (9,5.3) {};

\foreach \from/\to in {v1/v6,v1/v4,v1/v5,v2/v4,v2/v5,v2/v6,v3/v5,v3/v6,v3/v4} \draw (\from) -- (\to);

\draw [dashed] (12,7) -- (15,5.1);
\draw [dashed] (9,2) -- (9,5.3);
\draw [dashed] (15,2) -- (15,5.1);
\draw [dashed] (15,2) -- (12,7);
\draw [dashed] (9,5.3) -- (12,0);
\draw [dashed] (9,2) -- (12,0);

\end{tikzpicture}
\end{subfigure}
\caption{The sixteen signed $K_6$.}
\label{DS62}
\end{figure}

\section{Conclusions and Remarks}
We described the different signatures of complete graph $K_6$ upto switching isomorphism. In \cite{Zaslavsky1}, Zaslavsky introduced the concepts of signed graph colouring and signed chromatic number of signed graphs. Zaslavsky also shown that these parameters are invariant under switching. So, study of these invariants for the sixteen non-isomorphic signatures of $K_6$ would be interesting.

\end{document}